\documentclass[a4paper,reqno, 11pt]{amsart}   	
\usepackage[DIV=12, oneside]{typearea}			
\usepackage[utf8]{inputenc}
\usepackage[T1]{fontenc}
\usepackage[english]{babel}
\usepackage[centertags]{amsmath}
\usepackage{amstext,amssymb,amsopn,amsthm}
\usepackage{mathrsfs}
\usepackage{dsfont}
\usepackage{bbm} 
\usepackage{thmtools}
\usepackage{mathtools}
\usepackage{graphicx}
\usepackage[titletoc,title]{appendix}
\usepackage{etexcmds}

\usepackage[backgroundcolor=white, bordercolor=blue,linecolor=blue]{todonotes}

\usepackage{multirow}

\usepackage{tikz}
\usepackage{pgfplots}
\pgfplotsset{compat=newest}
\usetikzlibrary{arrows.meta, decorations.pathreplacing, positioning}
\usetikzlibrary{calc, patterns,angles,quotes, tikzmark, fit, shapes.geometric}
\tikzset{%
	ebo unit/.store in=\ebounit,
	ebo corners/.style={rounded corners=#1\ebounit},
}
\usepgfplotslibrary{fillbetween}
\definecolor{c595959}{RGB}{89,89,89}
\definecolor{c5a5a5a}{RGB}{90,90,90}

\usepackage[colorlinks=true, linkcolor=black, urlcolor={black},
pdfborder={0 0 0}, citecolor=black]{hyperref}

\usepackage{enumitem}
\setlist[enumerate]{itemsep=0mm}

\addto\extrasenglish{}
\addto\extrasenglish{}
\addto\extrasenglish{}

\theoremstyle{plain}
\declaretheorem[title=Theorem, parent=section]{theorem}

\declaretheorem[title=Corollary,sibling=theorem]{corollary}

\theoremstyle{definition}

\declaretheorem[title=Remark,sibling=theorem]{remark}
\declaretheorem[title=Remark, numbered=no]{remark*}

\declaretheorem[title=Assumption, numbered=no]{assumption*}

\numberwithin{equation}{section}

\newcommand{\N}{\mathds{N}}
\newcommand{\R}{\mathds{R}}

\def\hmath$#1${\texorpdfstring{{\rmfamily\textit{#1}}}{#1}}

\newcommand{\eps}{\varepsilon}

\newcommand{\loc}{\mathrm{loc}}

\newcommand{\BIGOP}[1]
{
	\mathop{\mathchoice%
		{\raise-0.22em\hbox{\huge $#1$}}%
		{\raise-0.05em\hbox{\Large $#1$}}{\hbox{\large $#1$}}{#1}}}

\def\XXint#1#2#3{{\setbox0=\hbox{$#1{#2#3}{\int}$}
		\vcenter{\hbox{$#2#3$}}\kern-.5\wd0}}

\newcommand{\BIGboxplus}{\mathop{\mathchoice%
		{\raise-0.35em\hbox{\huge $\boxplus$}}%
		{\raise-0.15em\hbox{\Large $\boxplus$}}{\hbox{\large $\boxplus$}}{\boxplus}}}

\DeclareMathOperator{\pv}{p.v.}

\renewcommand{\d}{\textnormal{d}}

\usepackage{esint}
 
\makeatletter
\def\namedlabel#1#2{\begingroup
	#2%
	\def\@currentlabel{#2}%
	\phantomsection\label{#1}\endgroup
}
\makeatother

\makeatletter
\newcommand{\hathat}[1]{%
	\begingroup%
	\let\macc@kerna\z@%
	\let\macc@kernb\z@%
	\let\macc@nucleus\@empty%
	\hat{\mathchoice%
		{\raisebox{.2ex}{\vphantom{\ensuremath{\displaystyle #1}}}}%
		{\raisebox{.2ex}{\vphantom{\ensuremath{\textstyle #1}}}}%
		{\raisebox{.16ex}{\vphantom{\ensuremath{\scriptstyle #1}}}}%
		{\raisebox{.14ex}{\vphantom{\ensuremath{\scriptscriptstyle #1}}}}%
		\smash{\hat{#1}}}%
	\endgroup%
}
\makeatother

\begin{document}
	\allowdisplaybreaks
	\title{A strong-type unique continuation principle for the fractional $p$-Laplacian} 
	
	\author{Florian Grube}

	\address{Fakult{\"a}t f{\"u}r Mathematik, Universit{\"a}t Bielefeld, Postfach 10 01 31, 33501 Bielefeld, Germany}
	\email{fgrube@math.uni-bielefeld.de}

	\makeatletter
	\@namedef{subjclassname@2020}{%
		\textup{2020} Mathematics Subject Classification}
	\makeatother
	
	\subjclass[2020]{35B60, 35R11, 35J60}
	
	\keywords{Fractional $p$-Laplacian, Unique continuation principle, strong UCP, Nonlocal nonlinear equations}
	
	\begin{abstract} 
		We provide a simple and direct proof of a strong-type unique continuation principle for the fractional $p$-Laplacian $(-\Delta_p)^s$ for a range of $s$ and $p$. The result extends to strong solutions of the fractional nonlinear Schr{\"o}dinger equation. We adapt the recent proofs of the weak UCP in \cite{BeSc26, Pra26}.
	\end{abstract}
	
	\hypersetup{pageanchor=false}
	\maketitle
	\hypersetup{pageanchor=true}
	
	\section{Introduction}\label{sec:intro}
	
	The unique continuation principle (UCP) is a fundamental property in the theory of partial differential equations. It states that if a solution to a partial differential equation, e.g., a Schrödinger equation $\Delta u +Vu=0$, vanishes in an open set (weak UCP) or to infinite order at some point (strong UCP), then it must already be zero. This principle is a fundamental tool in studying inverse problems or the stability of the Cauchy problem.
	
	For the $p$-Laplacian, the problem of unique continuation is open and only in spatial dimension $d=2$ is it partially resolved, see the contributions in the '80s \cite{Ale87, Man88, BoIw87}.
	
	For $s \in (0,1)$ and $p \in (1, \infty)$, the fractional $p$-Laplacian is defined up to a normalizing constant via the principal value integral
	\begin{equation}\label{eq:frac-p-lap}
		(-\Delta_p)^su(x)\coloneq \pv \int_{\R^d} \frac{|u(x)-u(y)|^{p-2}\big(u(x)-u(y)\big)}{|x-y|^{d+sp}}\d y.
	\end{equation}
	This operator arises naturally in various contexts, including non-Newtonian fluid mechanics, game theory, and anomalous diffusion, see, e.g., \cite{DPV12}. While existence and regularity theory for equations involving \eqref{eq:frac-p-lap} have seen immense progress over the last decade, unique continuation properties have remained an open problem due to the rigid incompatibility between traditional UCP techniques and the nonlinear nature of $(-\Delta_p)^s$. 
	
	In the recent preprint \cite{BeSc26}, the authors characterize the class of L{\'e}vy-operators for which the weak continuation principle holds. They provide a direct proof of the weak UCP which hinges on the nonlocality of the L{\'e}vy-operator. This idea was applied to nonlinear, nonlocal parabolic equations with rough potentials in \cite{Pra26}. 
	
	In this article, we generalize the approach in \cite{BeSc26, Pra26} in order to obtain a strong-type UCP. 	
	
	\begin{theorem}[Conditional strong-type UCP]\label{th:fn-sucp}
		Let $s\in (0,1)$, $p\in (1,\infty)$, $0\in \Omega\subset \R^d$ be an open set, let $f\in L^\infty(\Omega)$ be an inhomogeneity such that $|f(x)|=\mathcal{O}(|x|^{n})$ as $|x|\to 0$ for any $n\in \N$, and $V:\Omega\times \R \to \R$ be a potential such that $|x|^{n_0}|t|^{-\eps_0}V(x,t)\in L^\infty(\Omega)$ for some $\eps_0>0$ and some $n_0\in \N$. Assume that $u\in L^{p-1}_{sp}(\R^d)$ satisfies 
		\begin{enumerate}
			\item[(i)] $u\in C^{1,\alpha}_\loc(\Omega)$ for some $\alpha>1-(1-s)p$ and $p\ge 2$ or 
			\item[(ii)] $u\in C^{\beta}_\loc(\Omega)$ for some $sp'<\beta\le 1$ and $p>1$, or
			\item[(iii)] $u\in C^{\lfloor \beta\rfloor, \beta-\lfloor \beta \rfloor}_\loc(\Omega)$ for some $sp'<\beta< 2$ and $1<p\le 2$
		\end{enumerate}
		and is a strong solution to 
		\begin{equation}\label{eq:fn-schroedinger}
			(-\Delta_p)^su-V(\cdot, u)=f \text{ in }\Omega.
		\end{equation}
		We define the numbers $m_{i,k}\in [0,\infty]$ for all $i\in \{1,\dots, d\}$ and $k\in \N$
		\begin{equation*}
			m_{i,k}\coloneq \int_{\R^d} \frac{x_i^{2k}}{|x|^{4k}} \frac{|u(x)|^{p-1}}{|x|^{d+sp}}\d x.
		\end{equation*}
		If the following series diverges for all $i\in \{1,\dots, d\}$
		\begin{equation}\label{eq:carleman}
			\sum_{k=1}^{\infty}\frac{1}{m_{i,k}^{1/(2k)}}=+\infty,
		\end{equation} 
		then $u=0$ on $\R^d$.
	\end{theorem}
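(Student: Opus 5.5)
The plan is to show that the divergence of \eqref{eq:carleman} forces $u$ to vanish to infinite order at $0$. Then I will turn the equation at points near $0$ into the statement that all moments of a signed measure built from $|u|^{p-2}u$ vanish after the inversion $z=y/|y|^2$. Finally, I will use the Carleman condition to conclude that this measure, and hence $u$, is zero.

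\emph{Step 1: infinite-order vanishing.} Since $x_i^{2k}/|x|^{4k}\sim |x|^{-2k}$ on cones around the $x_i$-axis near $0$, a term with $m_{i,k}=\infty$ contributes $0$ to \eqref{eq:carleman}. So divergence forces $m_{i,k}<\infty$ for infinitely many $k$, and hence for all $k$ by monotonicity near $0$. Summing over $i$, $\int_{B_1}|u|^{p-1}|x|^{-2k-d-sp}\,\d x<\infty$ for all $k$. Together with the local Hölder continuity in (i)--(iii), this gives the pointwise bound $u(x)=\mathcal O(|x|^n)$ for all $n$; oscillation on a ball of radius $\sim|x|^{1+\delta}$ cannot compensate for the small averages.

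\emph{Step 2: a vanishing potential.} Write $\varphi(t)=|t|^{p-2}t$, $g=\varphi(u)$, and $F(x)=\int g(y)|x-y|^{-d-sp}\,\d y$. I will show $F(x)=\mathcal O(|x|^n)$ for all $n$, and in fact that all Taylor coefficients of $F$ at $0$ vanish. The starting identity is
\[
(-\Delta_p)^s u(x)+F(x)=\pv\int \frac{\varphi(u(x)-u(y))-\varphi(-u(y))}{|x-y|^{d+sp}}\,\d y .
\]
The right-hand side is $\mathcal O(|x|^n)$ by the following pieces.
\begin{itemize}
\item Away from $x$ and $0$, I use $|\varphi(a-b)-\varphi(-b)|\lesssim |a|^{p-1}$ for $p\le 2$, and $\lesssim |a|(|a|+|b|)^{p-2}$ for $p\ge2$.
\item Near $x$ (and near $0$), the singular part is controlled by the regularity hypotheses (i)--(iii); these are exactly the thresholds making $(-\Delta_p)^s$ pointwise defined. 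The small factor comes from infinite-order vanishing on $B_{2|x|}$.
\item The remaining terms are small because $|f(x)|=\mathcal O(|x|^n)$ and $|V(x,u)|\lesssim |x|^{-n_0}|u|^{\eps_0}=\mathcal O(|x|^n)$.
\end{itemize}
Since $F$ is smooth near $0$ (by Step 1, $g$ vanishes to infinite order there), differentiating under the integral yields $\int g(y)\,\partial^\alpha_x|x-y|^{-d-sp}\big|_{x=0}\,\d y=0$ for all multi-indices $\alpha$.

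\emph{Step 3: moments and Carleman.} Set $\nu=(g|y|^{-d-sp}\,\d y)$ pushed forward under $z=y/|y|^2$. One computes $\partial^\alpha_x|x-y|^{-d-sp}|_{x=0}=|y|^{-d-sp}Q_\alpha(z)$ with $Q_\alpha$ a polynomial of degree $|\alpha|$. This is visible along $x=te_i$ via the Gegenbauer expansion $(1-2t z_i+t^2|z|^2)^{-(d+sp)/2}=\sum_j t^jC^{(d+sp)/2}_j(z_i/|z|)|z|^j$.

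The main obstacle here is a linear-algebra fact: because $d+sp\neq d-2$, the kernel is not harmonic, so the $Q_\alpha$ with $|\alpha|\le j$ span all polynomials of degree $\le j$. I would prove it by induction on degree, using that $\Delta_x$ applied to the kernel is a nonzero multiple of $|x-y|^{-d-sp-2}$; this recovers the $|z|^{2}$ factors lost to harmonic projection. Granting this, $\int z^\alpha\,\d\nu=0$ for all $\alpha$, so $\nu^+$ and $\nu^-$ have identical finite moments.

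Their coordinate marginals have even moments bounded by exactly $m_{i,k}$, since $z_i^{2k}=y_i^{2k}/|y|^{4k}$. By \eqref{eq:carleman} and the multivariate Carleman criterion (Carleman's condition on all coordinate marginals implies moment determinacy, due to Petersen and de Jeu), $\nu^+=\nu^-$. Hence $g=0$ a.e., and so $u=0$ on $\R^d$.
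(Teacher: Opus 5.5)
Your Steps 1 and 3 are essentially the paper's argument: infinite-order vanishing from the Carleman series plus Hölder continuity, then the inversion $z=y/|y|^2$, the marginal bounds $\int z_i^{2k}\,\d\nu^\pm\le m_{i,k}$, and multivariate Carleman determinacy. Your Laplacian argument that the $Q_\alpha$ span all polynomials is a fine way to carry out the induction the paper takes from \cite{BeSc26}.

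The gap is in Step 2, which is exactly where the strong UCP differs from the weak one. Your $F(x)=\int g(y)|x-y|^{-d-sp}\,\d y$ is not defined at any $x\neq 0$ with $u(x)\neq 0$. Near $y=x$ the integrand behaves like $g(x)|x-y|^{-d-sp}$, which is not integrable. A principal value does not help, because this singularity is even. Vanishing of $g$ to infinite order at the single point $0$ gives no information near $x\neq 0$: for $g(y)=e^{-1/|y|}$ one gets $F\equiv+\infty$ on a punctured neighborhood of $0$.

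For the same reason your starting identity equates divergent quantities. The term $-\varphi(-u(y))=\varphi(u(y))$ on the right produces the same non-integrable singularity at $y=x$. The regularity hypotheses (i)--(iii) only control the singular part of $\varphi(u(x)-u(y))$, not this one. Finally, the claim that $F$ is smooth near $0$, on which your differentiation under the integral rests, would require $u\equiv 0$ on a neighborhood of $0$. That is precisely the hypothesis of the weak UCP.

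What is missing is a truncation of the interaction at a scale tied to the point of evaluation. The paper never evaluates such a potential away from the origin. It takes difference quotients $D^\alpha_\eps U(0)$, which only involve $x\in B_{k\eps}(0)$, and splits the operator at radius $(k+1)\eps$:
\begin{itemize}
\item The near part is $\mathcal O(\eps^n)$ for all $n$ (Claim A). This interpolates the regularity (i)--(iii) against the bound $\|u\|_{L^\infty(B_{C\eps})}=\mathcal O(\eps^n)$.
\item The far part equals $-\int_{B_{(k+1)\eps}(0)^c}g(y)|x-y|^{-d-sp}\,\d y$ up to $\mathcal O(\eps^n)$ (Claim B).
\end{itemize}
This truncated potential is harmless because $|x-y|\ge\eps$ on its domain. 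By dominated convergence and $|g(y)|=\mathcal O(|y|^n)$, its $\alpha$-th difference quotient at $0$ converges to $\int g(y)(-1)^{|\alpha|}\partial^\alpha_y|y|^{-d-sp}\,\d y$. The super-polynomial smallness of all error terms beats the factor $\eps^{-|\alpha|}$.

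An equivalent repair of your Step 2 uses $F_\delta(x)=\int_{|y|>\delta}g(y)|x-y|^{-d-sp}\,\d y$. Show with the same near-field estimates that $\sup_{B_{\delta/2}}|F_\delta|=\mathcal O(\delta^n)$. Note that the derivatives of $F_\delta$ on $B_{\delta/2}$ are bounded uniformly in $\delta$. Then deduce that the Taylor coefficients of $F_\delta$ at $0$ tend to $0$ as $\delta\to 0$.
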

	
	Here, we use the conventions $1/0=+\infty$ and $1/\infty=0$.
	
	\begin{remark}\
		\begin{itemize}
			\item[(i)] Already in the linear case ($p=2$), in order to obtain a priori the regularity $C^{1,\alpha}$ for $\alpha>1-(1-s)p$ or $C^{\beta}$ for $\beta>sp'$ one would need to assume the potential and the inhomogeneity to have some regularity in a neighborhood of the origin. 
			\item[(ii)] The assumption \eqref{eq:carleman} resembles the Carleman condition in the moment problem, see \cite[Section 4.2]{Sch17}. It yields that $|u(x)|$ decays to arbitrary order at the origin. 
			\item[(iii)] Instead of \eqref{eq:carleman}, we could assume the stronger but more accessible assumption: there exists $M>0$ such that for all $k\in \N$
			\begin{equation*}
				\int_{B_{1/k}(0)} \frac{|u(x)|^{p-1}}{|x|^{d+sp+2k}}\d x\le (Mk)^{2k}.
			\end{equation*}
			\item[(iv)] Theorem \autoref{th:fn-sucp} easily extends to parabolic problems since the additional time derivative $\partial_t u$ vanishes to arbitrary order. 
			\item[(v)] In contrast to the local ($s=1$) counterpart, the UCP for fractional operators immediately yields $u=0$ on the full space $\R^d$. This is a consequence of the interactions in \eqref{eq:frac-p-lap} which incorporate values of $u$ on the full space. 
			\item[(vi)] \autoref{th:fn-sucp} is stronger than the weak UCP since the sequences $m_{i,k}$ are uniformly bounded if $u$ vanishes in a neighborhood of the origin and, thus, the Carleman-condition \eqref{eq:carleman} is easily satisfied.
		\end{itemize}
		
	\end{remark}
	
	The following corollary directly applies \autoref{th:fn-sucp} and yields a strong-type unique continuation for weak solutions to the fractional $p$-Laplace problem.
	\begin{corollary}
		Let $s\in (0,1)$, $p\in(1,\infty)$ such that $1/(1-s)\le p<2/(1-s)$, and $0\in \Omega\subset \R^d$ be an open set. Assume that $u\in W^{s,p}(\Omega)\cap L^{p-1}_{sp}(\R^d)$ is a weak solution to 
		\begin{equation*}
			(-\Delta_p)^su=0 \text{ in }\Omega.
		\end{equation*}
		If \eqref{eq:carleman} holds, then $u=0$ on $\R^d$. 
	\end{corollary}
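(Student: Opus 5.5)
The corollary is a direct application of \autoref{th:fn-sucp} with $V\equiv 0$ and $f\equiv 0$. The work is to show that a weak solution is a strong solution with enough regularity to fall into case (i) or (ii). Since the statement is local around the origin, we may shrink $\Omega$ to a ball $B_{2r}(0)\Subset\Omega$. The hypotheses $f=0$ and $V=0$ trivially satisfy the vanishing and integrability conditions (take any $\eps_0,n_0$). So it remains to verify the regularity and the strong-solution property.

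\textbf{Step 1: regularity.} We invoke the known interior regularity theory for the homogeneous fractional $p$-Laplace equation, assumed as given results from the literature (Brasco--Lindgren--Schikorra for $p\ge2$, Garain--Lindgren for $1<p<2$). In the degenerate case $p\ge 2$, weak solutions in $W^{s,p}_\loc\cap L^{p-1}_{sp}$ are $C^{\gamma}_\loc$ for every $\gamma<\min\{1,sp'\}$ when $sp'\neq 1$. At least when $s$ is close to $1$, they are moreover $C^{1,\alpha}_\loc$ for some $\alpha>0$.

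The range $1/(1-s)\le p<2/(1-s)$ is exactly what is needed for the exponents to match:
\begin{itemize}
\item $p\ge 1/(1-s)$ is equivalent to $(1-s)p\ge1$, i.e.\ $1-(1-s)p\le 0$. Then any $\alpha>0$ satisfies $\alpha>1-(1-s)p$, so $C^{1,\alpha}_\loc$ regularity with any positive $\alpha$ places $u$ in case (i). (Note $p\ge 1/(1-s)>1$; if $p<2$ one works instead with case (iii).)
\item $p<2/(1-s)$ gives $(1-s)p<2$, which ensures the nonlocal term remains meaningful pointwise for $C^{1,\alpha}$ functions with the available $\alpha$.
\end{itemize}

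In the case (iii) situation we use $\beta=1+\alpha$ and need $sp'<1+\alpha$. This holds because $sp'=sp/(p-1)$ and the lower bound on $p$ gives $sp'\le 1$ when $p\ge 1/(1-s)$. Indeed, $sp\le p-1$ is equivalent to $p(1-s)\ge1$, so $sp'\le 1<\beta$, and case (ii) or (iii) applies with $\beta=1$ or $\beta=1+\alpha$. The borderline $sp'=1$ needs $\beta$ strictly larger, which is why $C^{1,\alpha}$ rather than Lipschitz regularity is needed.

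\textbf{Step 2: strong solution.} With $u\in C^{1,\alpha}_\loc$ and $sp'\le1$, the principal value integral in \eqref{eq:frac-p-lap} converges locally uniformly. Near $x$, we symmetrize and use $|u(x+h)-u(x)|^{p-2}(\cdot)$ together with the Taylor expansion. The tail is controlled by $u\in L^{p-1}_{sp}$. Hence $(-\Delta_p)^su$ is continuous in $B_r$. An integration by parts against test functions then shows that it coincides with the weak formulation, so $(-\Delta_p)^su=0$ pointwise.

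\textbf{Conclusion and main obstacle.} Applying \autoref{th:fn-sucp} with \eqref{eq:carleman} yields $u=0$ on $\R^d$. The main obstacle is Step 1: securing the correct Hölder/$C^{1,\alpha}$ regularity from the literature in exactly the stated range of $(s,p)$, in particular at the endpoint $p=1/(1-s)$. There I would first try to use the $C^{1,\alpha}$ results for $s$ close to $1$ and otherwise the almost-Lipschitz $C^{\gamma}$ estimates combined with case (ii).
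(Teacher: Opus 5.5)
Your reduction to \autoref{th:fn-sucp} with $f\equiv 0$, $V\equiv 0$ is the paper's route, and your exponent bookkeeping is right \emph{provided} $u\in C^{1,\alpha}_\loc$ near the origin. Since $(1-s)p\ge 1$, every $\alpha>0$ exceeds $1-(1-s)p$, so case (i) applies for $p\ge 2$, and cases (ii)/(iii) handle $p<2$ as you describe. The gap is that you never obtain this regularity in the stated range. The condition $p\ge 1/(1-s)$ is equivalent to $sp'\le 1$. The $C^{1,\alpha}$ results you have in mind (``$s$ close to $1$'', e.g.\ $s>(p-1)/p$) hold exactly in the complementary regime $sp'>1$, i.e.\ $p<1/(1-s)$, so they are never available under the corollary's hypotheses. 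Your fallback fails for a structural reason. The Brasco--Lindgren--Schikorra and Garain--Lindgren estimates give $C^\gamma$ only for $\gamma<\min\{1,sp'\}=sp'$, while case (ii) demands $\beta>sp'$. When $sp'>1$, no $\beta\le 1$ can exceed $sp'$ at all. So neither of your routes produces the hypothesis of the theorem.

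What is missing is an interior $C^{1,\alpha}$ estimate for fractional $p$-harmonic functions that also holds when $sp'\le 1$. This is what the paper takes from the regularity results it cites (BDLBS24, GJS25). The upper bound $p<2/(1-s)$, i.e.\ $s>(p-2)/p$, is the range in which that $C^{1,\alpha}$ theory is available. Your explanation of the upper bound is therefore off. For $u\in C^{1,\alpha}$ and $p\ge 2$, the symmetrized integrand is $\mathcal{O}(|h|^{p-1+\alpha-d-sp})$, which is integrable near $0$ iff $\alpha>1-(1-s)p$. Increasing $p$ only helps pointwise definability; nothing in \autoref{th:fn-sucp} or in the weak-to-strong step needs $(1-s)p<2$. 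Once the correct regularity theorem is inserted, your Step 2 is a reasonable way to fill in what the paper leaves implicit: pass from weak to strong solutions via truncation, dominated convergence, and continuity of $(-\Delta_p)^s u$.
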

	This corollary follows directly from \autoref{th:fn-sucp} coupled with the known interior regularity result \cite{BDLBS24, GJS25}.

	If one equips the fractional $p$-Laplacian, see \eqref{eq:frac-p-lap}, with an appropriate normalization constant $c_{d,p,s}\asymp s(1-s)$, then it converges to the classical $p$-Laplace operator. We refer to the discussion in \cite{dTGCV} for appropriate choices of normalization constants. We want to stress that the phenomenon we exploit in our proof is purely nonlocal and there is no real hope of proving the UCP for the $p$-Laplacian using this result asymptotically. 
	
	\begin{remark}
		It is not clear to me how to weaken the assumption \eqref{eq:carleman} to $\int_{B_R(0)}|u(y)|^{p-1}\d y = \mathcal{O}(R^n)$ as $R\to 0$ for all $n\in \N$ which is the typical assumption in the strong UCP, compare \cite[Example 4.22]{Sch17}.
	\end{remark}

	\subsection{Related literature} 
	
	The first contribution to the unique continuation principle for the fractional Laplacian was done in \cite{Rie38} using the Kelvin transform. In the past decades, there has been large interest in the UCP for nonlocal problems. A key ingredient in the proofs was the extension technique provided in the work \cite{CaSi07}. This allows to write certain nonlocal problems as traces of higher-dimensional local problems. The first article in which this method was applied in order to obtain the strong unique continuation principle was \cite{FaFe14}. Utilizing frequency function methods, they prove the strong unique continuation for the fractional Schrödinger equation with certain scaling critical Hardy potentials. Relying on Carleman estimates, the strong UCP for $(-\Delta)^su-Vu=0$ was generalized to a larger class of potentials in \cite{Rue15}. In \cite{Seo14, Seo15}, the weak UCP for open sets for the same equation was established for potentials $V\in L^{n/(2s),\infty}$. Generalizing the strategy by Garofalo and Lin to the fractional setting, in \cite{Yu17} unique continuation properties are studied. Quantitative UCP for operators like $\sum_i (-\partial_i^2)^s+q$ were established in \cite{GMR20}. In \cite{Rue19a}, quantitative uniqueness properties for perturbations of Riesz transforms were studied.
	
	The weak continuation principle for fractional Schrödinger equations is also proved using extensions techniques and the Carleman estimates from \cite{Rue15} in \cite{GSU20}.

	The UCP for the relativistic Schrödinger equation was studied in \cite{FaFe15}. Various qualitative and quantitative (global) unique continuation properties for the fractional discrete Laplacian are contained in the article \cite{FRR24}. In \cite{WMW24}, the decay properties of solutions to certain nonlocal nonlinear operators $L_p^su=0$ were studied using the Caffarelli-Silvestre extension. Higher-order fractional Laplace equations and the UCP was studied in \cite{FeFe20}. As a consequence of the analyticity in the related extension problem, in \cite{BaGa23, ArBa23, ArBa23b, ABDG23, BaGh25} the authors prove the weak UCP for fractional powers of parabolic equations. In \cite{Rue17}, the UCP for nonlocal operators on compact Riemannian manifolds is studied. A characterization of the UCP for a class of nonlocal pseudo-differential operators with weights in terms of some monotonicity of the eigenvalues was studied in \cite{FrIa20}.
	
	The unique continuation principle is a critical tool in the study of the Calder{\'o}n problem. For nonlocal operators inverse problems were studied in \cite{GSU20}, \cite{RuSa18}, \cite{RuSa20}, \cite{CLR20}, \cite{LLR20},  \cite{GRS20}, \cite{GLX17}, \cite{KRZ23}, \cite{KLZ24}, \cite{CRTZ24}, and \cite{RaZi24}.	
	
	\subsection{Strategy of the proof}
	
	The main observation in the proof of the weak UCP in \cite{BeSc26, Pra26} (applied to the fractional, nonlinear Schrödinger equation) is that the equality 
	\begin{equation*}
		(-\Delta_p)^su= Vu \text{ in }\Omega\ni 0
	\end{equation*}
	can be differentiated in the set $\Omega$ where $u$ vanishes. The explicit representation of the fractional $p$-Laplacian as an integro-differential operator yields, after differentiating under the integral, 
	\begin{equation}\label{eq:moments-strategy}
		\int_{\Omega^c} |u(y)|^{p-2}u(y) \partial_x^\alpha|x-y|^{-d-sp}\d y=0 \text{ for all }x\in \Omega.
	\end{equation}
	Then it remains to notice that, after an application of the Stone-Weierstraß theorem, the set of functions 
	\begin{equation*}
		\operatorname{span} \{y\mapsto \frac{y^\alpha}{|y|^{2|\alpha|}}\mid \alpha\in \N^d \}
	\end{equation*}
	is dense in $C_0(\Omega^c)$. \smallskip
	
	For the strong UCP, it is not possible to differentiate the equality $(-\Delta_p)^su= Vu$ in a neighborhood of the origin. Instead, we set up finite difference quotients centered at the origin and prove by hand that they converge. This is only possible since we assume our solution to be strong solutions. In contrast to the approach in \cite{BeSc26, Pra26}, we cannot invoke the Stone-Weierstraß theorem since the functions $y^\alpha/|y|^{2|\alpha|}$ do not belong to $C_0(\R^d)$. Instead, we use the Kelvin-transform on \eqref{eq:moments-strategy} (at $x=0$) in order to transform the problem into a $d$-dimensional Hamburger moment problem for the measures $J_p(u(z/|z|^2))_{\pm}|z|^{sp-d}\d z$. Uniqueness to the moment problem provided by the Carleman condition \eqref{eq:carleman} completes the proof.
	
	\subsection*{Acknowledgments} Thanks to Moritz Kassmann for comments on the manuscript. Financial support by the German Research Foundation (DFG - Project number 541771122) is gratefully acknowledged. 
	
	\section{Preliminaries}\label{sec:prelims}
	We briefly introduce the notation, conventions, boundary assumptions, and function spaces used throughout this article. 
	
	The positive part of a real number $r$ is denoted by $r_+\coloneq \max\{r,0\}$ and the negative part $r_-\coloneq (-r)_+$. The notation $\lfloor r\rfloor$ denotes the largest integer that is smaller than $r$. A ball with radius $r$ and with center $x\in \R^d$ is written as $B_r(x)$. At times, we omit the center point and simply write $B_r$. For a vector $x\in \R^d$, we use the convention $x=(x',x_d)$ where $x'\in \R^{d-1}$ and $x_d\in \R$. We write $e_d\coloneq (0,\dots, 0, 1)\in \R^d$. 
	
	In our proofs, we use generic constants $C$ which depend only on universal quantities. These constants may change from line to line. 
	
	The space of Hölder continuous functions on $\overline{\Omega}$ is written as $C^{\alpha}(\overline\Omega)$, its seminorm is denoted by $[\cdot]_{C^{\alpha}(\overline{\Omega})}$. We also use $\dot{C}^\alpha(\overline{\Omega})$ to denote the homogeneous set of continuous functions $f:\Omega\to \R$ such that $[f]_{C^{\alpha}(\Omega)}<\infty$.
	
	The finite difference quotient $ D_{\eps}^\alpha$ for some multiindex $\alpha\in \N$ and $\eps>0$ is defined inductively via $ D_{\eps}^0 u(x) \coloneq u(x)$ and for any $i\in \{1,\dots, d\}$ and any multiindex $\alpha $ we set
	\begin{equation*}
		 D_{\eps}^{e_i+\alpha}u(x)\coloneq \frac{ D_{\eps}^\alpha u(x+\eps e_i)-  D_{\eps}^\alpha u(x)}{\eps}.
	\end{equation*}
	
	The tail-space $L_{sp}^{p-1}(\R^d)$ is defined as the space of locally $(p-1)$-integrable functions such that the norm
	\begin{equation*}
		\|u\|_{L_{sp}^{p-1}(\R^d)}\coloneq \Big( \int_{\R^d} \frac{|u(x)|^{p-1}}{(1+|x|)^{d+sp}}\d x \Big)^{1/(p-1)}
	\end{equation*}
	is finite. By $W^{s,p}(\Omega)$, we denote the classical fractional Sobolev space (à la Gagliardo).
	
	We say that $u\in L_{sp}^{p-1}(\R^d)$ is a strong solution to \eqref{eq:fn-schroedinger} if the equality \eqref{eq:fn-schroedinger} holds for all $x\in \Omega$. Instead, we say that $u\in W^{s,p}(\Omega)\cap L_{sp}^{p-1}(\R^d)$ is a weak solution if for any test function $\phi\in C_c^\infty(\Omega)$
	\begin{equation*}
		\frac{1}{2}\int_{\R^d}\int_{\R^d} \frac{|u(x)-u(y)|^{p-2}(u(x)-u(y))(\phi(x)-\phi(y))}{|x-y|^{d+sp}}\d y \d x= \int_{\Omega} V(x,u(x))\phi(x)\d x.
	\end{equation*}
	In case of the general Schrödinger-potential, we assume $|x|^{-n_0}|u(x)|^{\eps_0}\in L^1_{\loc}(\Omega)$ to ensure the finiteness of the integral on the right-hand side.

	\section{Proof of {\autoref{th:fn-sucp}}}
	Let $R_0\in (0,1)$ such that $B_{R_0}(0)\subset\subset \Omega$. 
	
	Firstly, let us prove that the condition \eqref{eq:carleman} implies that $u(x)=\mathcal{O}(|x|^n)$ as $x\to 0$ for all $n\in \N$. We prove this via contradiction. Assume that $u(x)=\mathcal{O}(|x|^n)$ as $x\to 0$ does not hold for all $n\in \N$. Then we find a number $n_0$, $\eps>0$, and a sequence $x_l\in B_{R_0}(0)$ converging to 0 as $l\to \infty$ such that $|u(x_l)|\ge \eps |x_l|^{n_0}$. Since $u\in C^\beta(\overline{B_{R_0}(0)})$, we find a radius $r_l\coloneq C|x_l|^{n_0/\alpha}$ such that $u(x)\ge \eps |x_l|^{n_0}/2$ for all $x\in B_{r_l}(x_l)$. After taking a subsequence we can find $i\in \{1,\dots, d\}$ such that $|(x)_i|\ge C|x_l|$ for all $x\in B_{r_l}(x_l)$ and all $l\in \N$. This yields
	\begin{equation*}
		m_{i,k}\ge C \int_{B_{r_l}(x_l)} |x_l|^{(p-1)n_0-2k-d-sp}\d x \ge C |x_l|^{(p-1)n_0-2k-d-sp+nd/\alpha}.
	\end{equation*} 
	Since $x_l\to 0$ as $l\to \infty$, we find $k_0\in \N$ such that $m_{i,k}=+\infty$ for all $k\ge k_0$. Plugging this into \eqref{eq:carleman} yields $\sum_{k=1}^{\infty}\frac{1}{m_{i,k}^{1/(2k)}}<\infty$ which is a contradiction.

	We define $J_p(x)\coloneq |x|^{p-2}x$ and know that 
	\begin{equation*}
		U(x)\coloneq \pv\int_{\R^d}\frac{J_p(u(x)-u(y))}{|x-y|^{d+sp}}\d y
	\end{equation*}
	equals $f(x)+V(x,u(x))$ in a neighborhood of the origin. We prove the result in two steps. 
	
	\textit{Step 1.} In the first step, we prove that for any multiindex $\alpha\in \N^d$ the finite difference quotient of the function $U$
	\begin{equation}\label{eq:finite-difference-approx}
		 D_{\eps}^\alpha U(0)\to - \int_{\R^d}J_p(u(y)) (-1)^{|\alpha|}\partial^\alpha_y |y|^{-d-sp}\d y \text{ as }\eps \to 0.
	\end{equation}
	
	In order to prove this, we provide a few auxiliary convergence results which aid in the proof. 
	
	\textit{Claim A.} For any fixed $k\in \N$ and $x\in B_{k\eps}(0)$, the term 
	\begin{equation*}
		W_{1}\coloneq \frac{1}{2}\int_{B_{(k+1)\eps}(0)}\frac{J_p(u(x)-u(x+h))+J_p(u(x)-u(x-h))}{|h|^{d+sp}}\d h
	\end{equation*}
	converges to zero of arbitrary order $\eps^n$ for all $n\in \N$. 
	
	\textit{Proof of claim A.} This is the only step of the proof in which we need to differentiate which regularity assumption on $u$ holds true and whether $p\ge 2$ or not. If $u\in C_\loc^{1,\alpha}(\Omega)$ and $p\ge 2$, then, using the regularity of $J_p$, we estimate
	\begin{equation}\label{eq:w-1-1-estimate}
		\begin{aligned}
			|W_{1}|&\le C  \int_{B_{(k+1)\eps}(0)}\big(|u(x)-u(x +h)| + |u(x)-u(x -h)|\big)^{p-2}\\
			&\qquad \qquad \times\frac{|2u(x)-u(x+h)-u(x -h)|}{|h|^{d+sp}}\d h\\
			&\le \int_{B_{(k+1)\eps}(0)}|h|^{p-2} \frac{\eps^{\lambda n} |h|^{(1+\alpha)(1-\lambda)}}{|h|^{d+sp}}\d h\le \eps^{\lambda (n-1)+(1-s)p +\alpha(1-\lambda)-1}.
		\end{aligned}
	\end{equation}
	Here, $\lambda \in (0,1)$ is chosen such that $(1+\alpha)(1-\lambda)-2 +p(1-s)>0$ which is possible by the assumption on $\alpha$. 
	
	If instead $u\in C^{1,\alpha}_\loc(\Omega)$ and $p\le 2$, then we can use that $J_p$ is uniformly in $\dot{C}^{p-1}(\R)$:
	\begin{equation*}
		\begin{aligned}
			|W_{1}|&\le C  \int_{B_{(k+1)\eps}(0)}\frac{|2u(x)-u(x+h)-u(x -h)|^{p-1}}{|h|^{d+sp}}\d h\\
			&\le C \int_{B_{(k+1)\eps}(0)}\frac{\eps^{\lambda (p-1) n} |h|^{(p-1)(1+\alpha)(1-\lambda)}}{|h|^{d+sp}}\d h\le C \eps^{\lambda (p-1)(n-1) +(p-1)(1+\alpha)(1-\lambda)-sp}.
		\end{aligned}
	\end{equation*}

	This proves the asymptotics of $W_{1}$ in the case $u\in C^{1,\alpha}_\loc(\Omega)$. If instead $u\in C^{\beta}_\loc(\Omega)$, then 
	\begin{equation*}
		\begin{aligned}
			|W_{1}|&\le C  \int_{B_{(k+1)\eps}(0)}\|u\|_{L^\infty(B_{4\eps}(0))}^{(p-1)\lambda} |h|^{\beta(p-1)(1-\lambda)-d-sp}\d h\le C \eps^{n(p-1)\lambda} \eps^{(p-1)\beta(1-\lambda)-sp}.
		\end{aligned}
	\end{equation*}
	Here, $\lambda\in (0,1)$ is chosen such that $\beta(1-\lambda)-sp'>0$. This completes our analysis of the term $W_{1}$ and proves claim A.\smallskip
	
	\textit{Claim B.} For any $k\in \N$ and any $x\in B_{k\eps}(0)$, the term
	\begin{equation*}
		\begin{aligned}
			W_2(x)&\coloneq \int_{B_{(k+1)\eps}(x)^c}\frac{J_p(u(x)-u(y))}{|x-y|^{d+sp}}\d y + \int_{B_{(k+1)\eps}(0)^c} \frac{J_p(u(y))}{|y|^{d+sp}}\d y\\
			&\qquad+  \int_{B_{(k+1)\eps}(0)^c}J_p(u(y))\Big(\frac{1}{|x-y|^{d+sp}} -\frac{1}{|y|^{d+sp}}\Big)\d y
		\end{aligned}
	\end{equation*}
	converges to zero of arbitrary order. 
	
	\textit{Proof of Claim B.} We split the term in two further terms $W_2= W_{2,1}+ W_{2,2}$ where
	\begin{equation*}
		W_{2,1}(x)\coloneq \int_{B_{(k+1)\eps}(0)^c}\frac{J_p(u(x)-u(y))+J_p(u(y))}{|x-y|^{d+sp}}\d y
	\end{equation*}
	and 
	\begin{equation*}
		W_{2,2}(x)\coloneq \int_{B_{(k+1)\eps}(x)^c}\frac{J_p(u(x)-u(y))}{|x-y|^{d+sp}}\d y-\int_{B_{(k+1)\eps}(0)^c}\frac{J_p(u(x)-u(y))}{|x-y|^{d+sp}}\d y.
	\end{equation*}
	
	In order to treat $W_{2,1}$, we use a $C^{\lambda}$-estimate for $J_p$ for some $\lambda\in (0,1)$ such that $p-1-\lambda>0$. This yields
	\begin{equation*}
		\begin{aligned}
			|W_{2,1}|&\le C  \int_{B_{(k+1)\eps}(0)^c}\frac{\big( |u(x)-u(y)| + |u(y)| \big)^{p-1-\lambda} |u(x)|^{\lambda}}{|x-y|^{d+sp}}\d y\\
			&\le C  \eps^{\lambda n}\int_{B_1(0)\setminus B_{(k+1)\eps}(0)} |y|^{n(p-1-\lambda)-d-sp} \d y + C \eps^{\lambda n}\|u\|_{L_{sp}^{p-1}(\R^d)  }^{p-1-\lambda}.
		\end{aligned}
	\end{equation*}
	Now, the term $W_{2,2}$ becomes after manipulating the integration domains
	\begin{equation*}
			W_{2,2}= \int_{B_{(k+1)\eps}(x)^c\cap B_{(k+1)\eps}(0)}\frac{J_p(u(x)-u(y))}{|x-y|^{d+sp}}\d y-\int_{B_{(k+1)\eps}(0)^c\cap B_{(k+1)\eps}(x)}\frac{J_p(u(x)-u(y))}{|x-y|^{d+sp}}\d y.
	\end{equation*}
	This allows us to estimate both terms, using, again, $u(y)=\mathcal{O}(|y|^{n})$ as $|y|\to 0$
	\begin{equation*}
		|W_{2,2}|\le C  \Big( \int_{B_{(k+1)\eps}(x)^c\cap B_{(k+1)\eps}(0)}+\int_{B_{(k+1)\eps}(0)^c\cap B_{(k+1)\eps}(x)}\Big)
		(|y|+\eps)^{(p-1)n-d-sp}\d y\le C  \eps^{(p-1)n-sp}.
	\end{equation*}
	This proves the claim B.\smallskip
	
	These two claims allow us to complete the first step of this proof. If $|\alpha|=0$, then the proof is trivial by definition. We define two auxiliary function $U_{1}$ and $U_{2}$ via
	\begin{equation*}
		\begin{aligned}
			U_{1}(x)\coloneq \frac{1}{2}\int_{B_{(k+1)\eps}(0)}\frac{J_p(u(x)-u(x+h))+J_p(u(x)-u(x-h))}{|h|^{d+sp}}\d h,\\
			U_{2}(x)\coloneq \frac{1}{2}\int_{B_{(k+1)\eps}(0)^c}\frac{J_p(u(x)-u(x+h))+J_p(u(x)-u(x-h))}{|h|^{d+sp}}\d h.
		\end{aligned}
	\end{equation*}
	Clearly, $U(x)=U_1(x)+U_2(x)$. Due to claim $A$, any finite difference quotient of $U_1$ vanishes, i.e., $ D_{\eps}^\alpha U_1(0)\to 0$ as $\eps \to 0$.
	
	Let $i\in \N$ such that $\alpha = e_i + \beta$ for some multiindex $\beta\in \N^d$. Let us consider a single difference of $U_2$. We may write 
	\begin{equation*}
		\begin{aligned}
			\frac{U_2(\eps e_i)-U_2(0)}{\eps}= \frac{W_2(\eps e_i)}{\eps}- \int_{B_{(k+1)\eps}(0)^c}J_p(u(y))\frac{|\eps e_i-y|^{-d-sp}-|y|^{-d-sp}}{\eps}\d y.
		\end{aligned}
	\end{equation*}
	Due to claim B, the first term, i.e., $W_{2}(\eps e_i)/\eps$ converges to zero of arbitrary order. This immediately yields that in the finite difference $ D_{\eps}^\alpha U_2(0)$ the only term that survives the limit $\eps \to 0$ is 
	\begin{equation*}
		\int_{B_{(k+1)\eps}(0)^c}J_p(u(y))  D_{\eps}^\alpha|(\cdot)-y|^{-d-sp}\d y.
	\end{equation*}
	Since $J_p(u(y))= \mathcal{O}(|y|^{(p-1)n})$ as $|y|\to 0$ for any $n\in \N$ and $J_p(u(y))/|y|^{d+sp}\in L^1(B_1(0)^c)$, due to dominated convergence, this term converges to 
	\begin{equation*}
		\int_{\R^d}J_p(u(y)) (-1)^{|\alpha|}\partial^\alpha_y |y|^{-d-sp}\d y \text{ as }\eps \to 0.
	\end{equation*}\smallskip
	
	\textit{Step 2.} In this step, we consider the derivative of the potential. Due to the fact that both $u$ and $f$ are vanishing of any order at the origin and using the assumption on the potential $V$, it is clear that
	\begin{equation*}
		\begin{aligned}
			| D_{\eps}^\alpha \big(f+V(\cdot,u(\cdot))\big)(0)|&\le C\eps^{n-|\alpha|}+  C \sup_{x\in B_{|\alpha|\eps}(0)} \frac{|V(x,u(x))|}{\eps^{|\alpha|}}\le C\eps^{n-|\alpha|}+  C \sup_{x\in B_{|\alpha|\eps}(0)} \frac{|u(x)|^{\eps_0}}{|x|^{n_0}\eps^{|\alpha|}}\\
			&\le C\eps^{n-|\alpha|}+ C \eps^{n\eps_0-n_0-|\alpha|}\to 0
		\end{aligned}
	\end{equation*}
	as $\eps\to0$ if we pick $n$ sufficiently large.\smallskip
		
	\textit{Step 3.} From step 1 and step 2, using an induction as in \cite{BeSc26}, we know that 
	\begin{equation*}
		\int_{\R^d} \frac{J_p(u(y))}{|y|^{d+sp}} \frac{y^\alpha}{|y|^{2|\alpha|}}\d y =0
	\end{equation*}
	for any multiindex $\alpha\in \N^d$. Applying the Kelvin transform ($z=y/|y|^2$) yields 
	\begin{equation}\label{eq:moment-problem}
		\int_{\R^d} z^\alpha \tilde{U}(z)\d z =0 
	\end{equation}
	for any multiindex $\alpha$. Here, $\tilde{U}(z)\coloneq J_p(u(z/|z|^2))|z|^{sp-d}$. From \eqref{eq:moment-problem}, we immediately get 
	\begin{equation*}
		\int_{\R^d} z^\alpha \tilde{U}_+(z)\d z= \int_{\R^d} z^\alpha \tilde{U}_-(z)\d z.
	\end{equation*}	
	Due to \cite{Nus65}, \cite[Theorem 3]{Pet82} and \cite[Theorem 4.3]{Sch17}, this moment problem yields $\tilde{U}_+=\tilde{U}_-$ since the Carleman condition \eqref{eq:carleman} is satisfied. From this the equality $u=0$ follows immediately.\qed
	
	
	\newcommand{\etalchar}[1]{$^{#1}$}

\end{document}